\newcommand{\beas}{\begin{eqnarray*}}
\newcommand{\eeas}{\end{eqnarray*}}
\newcommand{\bea}{\begin{eqnarray}}
\newcommand{\eea}{\end{eqnarray}}
\newcommand{\beq}{\begin{equation}}
\newcommand{\eeq}{\end{equation}}
\newcommand{\ben}{\begin{enumerate}}
\newcommand{\een}{\end{enumerate}}
\newtheorem{theorem}{Theorem}
\newtheorem{lemma}[theorem]{Lemma}
\newtheorem{corollary}[theorem]{Corollary}
\newtheorem{conjecture}[theorem]{Conjecture}
\theoremstyle{definition}
\newtheorem{remark}[theorem]{Remark}
\definecolor{darkblue}{rgb}{0,0,0.6}
\definecolor{myred}{rgb}{1,0,0}
\author[James A. Sellers]{James A. Sellers}
\address{Department of Mathematics and Statistics, University of Minnesota Duluth, Duluth, MN 55812}
\email{jsellers@d.umn.edu}
\author[Fabrizio Zanello]{Fabrizio Zanello}
\address{Department of Mathematical Sciences, Michigan Tech, Houghton, MI 49931}
\email{zanello@mtu.edu}
\title[On the parity of the number of partitions with odd multiplicities]{On the parity of the number of partitions\\with odd multiplicities}
\begin{document}

\begin{abstract} 
{
Recently, Hirschhorn and the first author considered the parity of the function $a(n)$ which counts the number of integer partitions of $n$ wherein each part appears with odd multiplicity. They derived an effective characterization of the parity of $a(2m)$ based solely on properties of $m.$  In this note, we quickly reprove their result, and then extend it to an explicit characterization of the parity of $a(n)$ for all $n\not\equiv 7 \pmod{8}.$  We also exhibit some infinite families of congruences modulo 2 which follow from these characterizations.

We conclude by discussing the case $n\equiv 7 \pmod{8}$, where, interestingly, the behavior of $a(n)$ modulo 2 appears to be entirely different. In particular, we conjecture that, asymptotically, $a(8m+7)$ is odd precisely $50\%$ of the time. This conjecture, whose broad generalization to the context of eta-quotients will be the topic of a subsequent paper, remains wide open.
}
\end{abstract}

\keywords{Partition function; odd multiplicity; density odd values; 
binary integer representation; eta-quotient}
\subjclass[2010]{Primary: 11P83; Secondary: 05A17, 11P84, 11E25}

\maketitle

\section{Introduction} 

In this note, we contribute to the study of the parity of an interesting and naturally-defined partition function, denoted by $a(n)$, which counts the number of partitions of an integer $n$ whose parts all appear with odd multiplicity. We refer the reader to Andrews \cite{Andr} for basic facts and terminology of partition theory.

The function $a(n)$ has been considered by multiple authors since at least the 1950's \cite{ASA}, but a systematic investigation of its parity has only recently begun, thanks to a paper of Hirschhorn and the first author \cite{HS}. Their main theorem established the 
fact that $a(2m)$ is odd precisely when $m=0$ or $m$ is a square not divisible by 3. In the first portion of this note, we reprove and extend the result of \cite{HS}, by explicitly characterizing the integers $n$ for which $a(n)$ is odd, provided $n\not\equiv 7 \pmod{8}$. Interestingly, for such values of $n$, $a(n)$ turns out to be \emph{almost always} even (that is, odd with density zero). All of our techniques will be elementary, and involve a careful analysis of certain quadratic representations.

We then briefly discuss the behavior of $a(n)$ when $n\equiv 7 \pmod{8}$. Within this arithmetic progression, the situation appears to be dramatically different. In particular, we conjecture that $a(8m+7)$ is odd with density $1/2$. As a curious consequence, we expect $a(n)$ to be odd for $6.25\%$ of the values of $n$. Consistent with the state of the art on the parity of the coefficients of other eta-quotients, including the ordinary partition function $p(n)$, unfortunately our conjecture appears extremely hard to attack with existing methods.

\section{Results on the parity of $a(n)$}

Let $a(n)$ denote the number of partitions of $n$ where each part appears with odd multiplicity.  For instance, $a(5)=5$, since the partitions $(5), (4,1), (3,2), (2,1,1,1)$, and $(1,1,1,1,1)$ only present odd multiplicities for each of the parts, while the remaining two partitions of 5, $(3,1,1)$ and $(2,2,1)$, each contain a part with even multiplicity. It is clear that the generating function for $a(n)$ is given by
\begin{equation}\label{gf}
\sum_{n\ge 0}a(n)q^n = \prod_{i\ge 1}\left(1+\frac{q^i}{1-q^{2i}}\right) =\prod_{i\ge 1}\frac{1+q^i-q^{2i}}{1-q^{2i}}.
\end{equation} 

We let
$$f_r(q)=f_r=\prod_{i\ge 1}(1-q^{ri}),$$
and say that two series $F(q)$ and $G(q)$ are \emph{congruent modulo 2}, writing  $F(q)\equiv G(q)$, if the coefficients of $q^n$ in the corresponding power series representations have the same parity, for all $n$. (Unless otherwise specified, all congruences in this paper will have modulo 2.) 

In the following lemma, we recall a few important facts. 

\begin{lemma}\label{2}
\begin{equation}\label{11}
f_1^3 \equiv f_3+ q f_9^3.
\end{equation} 

\begin{equation}\label{22}
f_1^3f_3^3\equiv f_1^{12} + q f_3^{12}.
\end{equation}

\begin{equation}\label{33}
\frac{f_3^3}{f_1} \equiv \sum_{n\in \mathbb Z} q^{n(3n-2)}.
\end{equation}
\end{lemma}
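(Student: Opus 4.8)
The plan is to deduce all three congruences from two completely classical facts: Euler's pentagonal number theorem $f_1=\sum_{k\in\Z}(-1)^k q^{k(3k-1)/2}$ and Jacobi's identity $f_1^3=\sum_{n\ge 0}(-1)^n(2n+1)q^{n(n+1)/2}$. Modulo $2$ all the signs and the factor $2n+1$ disappear, so $f_1\equiv\sum_{k\in\Z}q^{k(3k-1)/2}$ and $f_1^3\equiv\sum_{n\ge 0}q^{n(n+1)/2}$, and, replacing $q$ by $q^r$, the analogous statements for $f_r$ and $f_r^3$. I will also use freely that $(1-q^i)^2\equiv 1-q^{2i}$, hence $f_r^{2^j}\equiv f_{2^jr}$, and in particular that fourth powers of power series are additive modulo $2$. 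For \eqref{11}, I start from $f_1^3\equiv\sum_{n\ge 0}q^{T_n}$ with $T_n=n(n+1)/2$ and split the sum by $n\bmod 3$. An elementary identity gives $T_{3j+1}=9T_j+1$, so the class $n\equiv 1$ contributes $q\sum_{j\ge 0}q^{9T_j}\equiv q f_9^3$; and $T_{3j}=3g_{-j}$, $T_{3j-1}=3g_j$ where $g_k=k(3k-1)/2$, so the classes $n\equiv 0,2$ together contribute $\sum_{k\in\Z}q^{3g_k}\equiv f_3$, using Euler's theorem for $f_3$. Adding the pieces yields \eqref{11}.

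For \eqref{33}, I would apply the Jacobi triple product (in the shape of Ramanujan's theta function $f(a,b)=(-a;ab)_\infty(-b;ab)_\infty(ab;ab)_\infty$, with $a=q$ and $b=q^5$) to the series on the right, which gives
\[
\sum_{n\in\Z}q^{n(3n-2)}=\prod_{k\ge 1}(1-q^{6k})(1+q^{6k-5})(1+q^{6k-1}).
\]
Modulo $2$ the plus signs may be switched to minus signs, and since $\{6k-5:k\ge 1\}\cup\{6k-1:k\ge 1\}$ is exactly the set of positive integers coprime to $6$, an inclusion--exclusion (Möbius) computation gives $\prod_{k\ge 1}(1-q^{6k-5})(1-q^{6k-1})=f_1f_6/(f_2f_3)$. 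Hence the right-hand product equals $f_1f_6^2/(f_2f_3)$, which, using $f_2\equiv f_1^2$ and $f_6\equiv f_3^2$, reduces modulo $2$ to $f_3^3/f_1$; this is \eqref{33}. (Since \eqref{33} is itself a classical pentagonal-type identity, one could instead simply cite it.)

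The congruence \eqref{22} is the part I expect to require the most care, and the key is to observe that it is \emph{self-referential} modulo $2$. From \eqref{11}, directly and with $q\mapsto q^3$, together with the additivity of fourth powers, one obtains $f_1^{12}\equiv f_3^4+q^4 f_9^{12}$, $f_3^{12}\equiv f_9^4+q^{12}f_{27}^{12}$, and $f_1^3 f_3^3\equiv(f_3+qf_9^3)(f_9+q^3 f_{27}^3)$. Substituting all of this into $E(q):=f_1^3 f_3^3+f_1^{12}+qf_3^{12}$, the two occurrences of $qf_9^4$ cancel, and the relation $f_9+q^3 f_{27}^3\equiv f_3^3$ collapses $f_3 f_9+q^3 f_3 f_{27}^3+f_3^4$ to $0$; what survives is precisely $E(q)\equiv q^4 E(q^9)$. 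As $E\in\mathbb{F}_2[[q]]$ (its constant term is $1+1+0=0$), comparing orders of vanishing would force $\ord(E)=4+9\,\ord(E)$, which is impossible unless $E\equiv 0$; this establishes \eqref{22}.

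So the one genuinely delicate point in the lemma is the algebraic bookkeeping in \eqref{22}: one must arrange the substitutions so that the error term $E(q)$ re-emerges as the rescaled copy $q^4E(q^9)$ of itself, after which vanishing is automatic. A less elegant alternative to \eqref{22} would run through the arithmetic of the Eisenstein integers $\Z[\omega]$: by Jacobi's identity, \eqref{22} is equivalent modulo $2$ to $\psi(q)\psi(q^3)\equiv\psi(q^4)+q\psi(q^{12})$ with $\psi(q)=\sum_{n\ge 0}q^{n(n+1)/2}$, i.e.\ to counting modulo $2$ the representations $x^2+3y^2=8N+4$ in positive odd integers $x,y$, which can be handled via unique factorization in $\Z[\omega]$; but this is heavier than the bootstrap above and I would not pursue it.
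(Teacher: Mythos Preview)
Your proof is correct. For \eqref{11} you follow exactly the paper's line: split Jacobi's sum $\sum_{n\ge 0}q^{T_n}$ according to $n\bmod 3$ and recognize the three pieces via Jacobi (for $f_9^3$) and Euler (for $f_3$). For \eqref{22} and \eqref{33}, however, the paper does not actually argue at all---it simply cites the literature (Borwein--Borwein--Garvan for \eqref{22}, Robbins for \eqref{33})---whereas you supply self-contained derivations. Your triple-product computation for \eqref{33} is standard and checks out (the inclusion--exclusion indeed gives $\prod_{(m,6)=1}(1-q^m)=f_1f_6/(f_2f_3)$, and the reduction $f_{2r}\equiv f_r^2$ finishes it). The bootstrap $E(q)\equiv q^4E(q^9)$ for \eqref{22} is a genuinely different and rather elegant argument: it trades the exact cubic theta identity behind the cited reference for a short elementary mod-$2$ proof, at the price of being specific to characteristic $2$. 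All the algebraic cancellations you describe are correct (the two $qf_9^4$ terms cancel; $f_3(f_9+q^3f_{27}^3)+f_3^4\equiv f_3\cdot f_3^3+f_3^4\equiv 0$; and what remains is exactly $q^4(f_9^3f_{27}^3+f_9^{12}+q^9f_{27}^{12})=q^4E(q^9)$), so the order-of-vanishing contradiction is valid.
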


\begin{proof} Formula (\ref{11}) easily follows from Euler's Pentagonal Number Theorem, namely 
$$f_1= \sum_{n\in \mathbb Z} (-1)^nq^{n(3n-1)/2}\equiv \sum_{n\in \mathbb Z} q^{n(3n-1)/2},$$ 
along with Jacobi's formula 
$$f_1^3\equiv  \sum_{n\ge 0} q^{n(n+1)/2},$$ 
since every integer is congruent to precisely one of 0, 1, or 2 modulo 3. (See also \cite[Chapter 1]{Hi}.)
Formula (\ref{22}) is essentially \cite[Theorem 2.3]{BBG}, while (\ref{33}) is \cite[Corollary 1]{Ro}.
\end{proof}

We now significantly extend the main result of \cite{HS} (which corresponds to Theorem \ref{characterization2} below), by characterizing the values of $n$, outside of the arithmetic progression $n\equiv 7 \pmod{8}$, such that $a(n)$ is odd.  We begin by providing a shorter proof of the result of \cite{HS}, namely a characterization of the parity of $a(2m)$.  

\begin{theorem}\label{characterization2} 
For all $m\geq 0,$ $a(2m)$ is odd if and only if  $m = 0$ or $m =k^2$ for some integer $k$ not divisible by 3.
\end{theorem}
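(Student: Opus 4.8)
The plan is to reduce the generating function \eqref{gf} modulo $2$ to a single eta-quotient, recognize it using Lemma~\ref{2}, and then isolate its even part.

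\textbf{Reduction.} Modulo $2$ one has $1+q^i-q^{2i}\equiv 1+q^i+q^{2i}=(1-q^{3i})/(1-q^i)$, so \eqref{gf} becomes
\[\sum_{n\ge 0}a(n)q^n\equiv\prod_{i\ge 1}\frac{1-q^{3i}}{(1-q^i)(1-q^{2i})}=\frac{f_3}{f_1 f_2}\equiv\frac{f_3}{f_1^3},\]
using $f_2\equiv f_1^2$. Next, \eqref{11} gives $f_1^3\equiv f_3+qf_9^3$, and \eqref{33} with $q$ replaced by $q^3$ gives $f_9^3/f_3\equiv\sum_{n\in\Z}q^{9n^2-6n}$, whence
\[\sum_{n\ge 0}a(n)q^n\equiv\frac{f_3}{f_3+qf_9^3}=\frac{1}{1+q\,f_9^3/f_3}\equiv\frac{1}{1+S(q)},\qquad S(q):=\sum_{k\ge 1,\;3\nmid k}q^{k^2},\]
since $9n^2-6n+1=(3n-1)^2$ and, as $n$ runs over $\Z$, the values $(3n-1)^2$ hit each square of a non-multiple of $3$ exactly once.

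\textbf{Even part.} I would write $S=S_e+S_o$, where $S_e$ collects the terms $q^{k^2}$ with $k$ even (its even-degree part) and $S_o$ those with $k$ odd. In the power series ring modulo $2$,
\[\frac{1}{1+S}=\frac{(1+S_e)+S_o}{(1+S_e)^2+S_o^2},\]
and the denominator involves only even powers of $q$, so the even part of the left-hand side equals $(1+S_e)\big/\big((1+S_e)^2+S_o^2\big)$. A direct substitution $k=2\ell$ (noting $3\nmid 2\ell\iff 3\nmid\ell$) gives $S_e(q)=S(q^4)$, while $(1+S_e)^2+S_o^2\equiv 1+(S_e+S_o)^2=1+S(q)^2\equiv 1+S(q^2)$ because squaring is additive modulo $2$. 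Setting $t=q^2$, this yields
\[\sum_{m\ge 0}a(2m)t^m\equiv\frac{1+S(t^2)}{1+S(t)}\equiv\frac{(1+S(t))^2}{1+S(t)}=1+S(t)=1+\sum_{k\ge 1,\;3\nmid k}t^{k^2},\]
and comparing coefficients gives exactly the claim: $a(2m)$ is odd if and only if $m=0$ or $m=k^2$ with $3\nmid k$.

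Every computation here is short, so the step that really needs care is the even-part extraction: the even/odd split $S=S_e+S_o$ must be propagated correctly through the division over $\Z/2\Z[[q]]$, and then the two simplifications $S_e(q)=S(q^4)$ and $S_e^2+S_o^2\equiv S(q^2)$ have to be combined to collapse $\tfrac{1+S(t^2)}{1+S(t)}$ down to $1+S(t)$. I expect that collapse to be the crux; once it is in place, everything else is routine bookkeeping with Lemma~\ref{2} (and Euler's pentagonal theorem as recalled in its proof).
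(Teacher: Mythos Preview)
Your argument is correct. The reduction to $f_3/f_1^3$ matches the paper, but from there you diverge: the paper invokes identity~\eqref{22} (the Borwein--Borwein--Garvan cubic identity) to split $f_3/f_1^3\equiv f_1^6/f_3^2+q\,f_3^{10}/f_1^6$ into its even and odd parts, and only then applies \eqref{11} and \eqref{33} to evaluate the even part as $1+\sum q^{(3n-1)^2}$. You instead use \eqref{11} and \eqref{33} \emph{first} to rewrite the whole generating function as $1/(1+S(q))$, and then extract the even part by the Frobenius trick $1/(1+S)=(1+S)/(1+S)^2$ together with $S(q)^2\equiv S(q^2)$ and $S_e(q)=S(q^4)$. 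Your route is more elementary in that it never touches \eqref{22}; on the other hand, the paper's use of \eqref{22} is what drives the subsequent characterizations for $a(4m+1)$ and $a(8m+3)$ via the iterated dissections \eqref{351}, \eqref{73}, \eqref{83}, so their choice pays off structurally across the whole paper even if it is heavier machinery for this single theorem.
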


\begin{proof}
Since we are working modulo 2, the generating function (\ref{gf}) becomes
$$\sum_{n\ge 0}a(n)q^n =\prod_{i\ge 1}\frac{1+q^i-q^{2i}}{1-q^{2i}}\equiv \prod_{i\ge 1}\frac{1+q^i+q^{2i}}{1-q^{2i}}\equiv \frac{f_3}{f_1^3}.$$
Dividing across (\ref{22}) by $f_1^6f_3^2,$ we obtain
\begin{equation}\label{351}
\frac{f_3}{f_1^3}\equiv  \frac{f_1^6}{f_3^2}+q \frac{f_3^{10}}{f_1^6}.
\end{equation}
Thus,  modulo 2, $a(2m)$ coincides with the coefficient of $q^m$ in the power series representation of $f_1^3/f_3$. By (\ref{11}) and (\ref{33}),
$$ \frac{f_1^3}{f_3}\equiv 1+q\frac{f_9^3}{f_3}\equiv 1+\sum_{n\in \mathbb Z} q^{1+3n(3n-2)}=1+\sum_{n\in \mathbb Z} q^{(3n-1)^2},$$
and the result follows.
\end{proof}

Next, we extend this result by obtaining an explicit characterization of the parity of $a(n)$ for the remaining values of $n\not\equiv 7 \pmod{8}$.  We begin by considering the behavior of $a(4m+1)$ modulo 2.  

\begin{theorem}
\label{characterization41}
Let the prime factorization of $4m+1$ be written as
$$4m+1 = \prod_{i\ge 1}p_i^{\alpha_i}.$$
Then, for all $m\geq 0,$  $a(4m+1)$ is odd if and only if

1) $m\equiv 0\pmod{3}$, and $4m+1$ is a square; or

2) $m\equiv 1\pmod{3}$, and all $\alpha_i$ are even except for exactly one of them, which is congruent to $1\pmod{4}$. 
\end{theorem}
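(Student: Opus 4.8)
The plan is to reduce the parity of $a(4m+1)$ to a count of lattice points on a circle, and then to evaluate that count modulo $2$ by means of the classical two‑square theorem. As in the proof of Theorem \ref{characterization2}, $\sum_{n\ge0}a(n)q^n\equiv f_3/f_1^3$, and by (\ref{351}), $f_3/f_1^3\equiv f_1^6/f_3^2+q\,f_3^{10}/f_1^6$; using $f_r^2\equiv f_{2r}$, the first summand is a power series in $q^2$ and the second is $q$ times a power series in $q^2$, so $a(4m+1)\equiv[q^{2m}]\bigl(f_3^5/f_1^3\bigr)$. Dividing across (\ref{22}) by $f_1^6f_3^5$ and then multiplying by $f_3^7$ gives, in the same way, $f_3^5/f_1^3\equiv f_1^6f_3^2+q\,f_3^{14}/f_1^6$, and taking the even part yields
\[
a(4m+1)\equiv[q^m]\bigl(f_1^3f_3\bigr)\pmod 2 .
\]

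Next I would make the underlying quadratic form explicit. By Jacobi's identity $f_1^3\equiv\sum_{a\ge0}q^{a(a+1)/2}$ and, applying Euler's Pentagonal Number Theorem with $q\mapsto q^3$, $f_3\equiv\sum_{b\in\mathbb{Z}}q^{3b(3b-1)/2}$. Multiplying these two series identifies $[q^m]\bigl(f_1^3f_3\bigr)$ with the number of pairs $(a,b)$ with $a\ge0$, $b\in\mathbb{Z}$, and $a(a+1)/2+3b(3b-1)/2=m$; multiplying this relation by $8$ and completing the two squares (so $8\cdot\frac{a(a+1)}{2}=(2a+1)^2-1$ and $8\cdot\frac{3b(3b-1)}{2}=(6b-1)^2-1$) recasts it, with $N:=4m+1$, as a representation $x^2+y^2=8m+2=2N$ with $x>0$ and $y\equiv5\pmod6$. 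Since $N$ is odd, $2N\equiv2\pmod4$, so in any such representation $x$ and $y$ are automatically odd, hence nonzero.

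Then I would carry out a parity count on the solution set of $x^2+y^2=2N$ under the order‑$8$ dihedral symmetry $(x,y)\mapsto(\pm x,\pm y),(\pm y,\pm x)$, treating separately the three residue classes of $m$ (equivalently of $N$) modulo $3$. If $m\equiv2\pmod3$, then $3\mid N$, which forces $3\mid x$ and $3\mid y$ in every representation, so no solution satisfies $y\equiv5\pmod6$ and $a(4m+1)$ is even. If $m\equiv0\pmod3$, then $N\equiv1\pmod3$, so $x,y$ are always coprime to $3$; for an $8$‑element orbit with representatives $x_0,y_0>0$, the four members with positive first coordinate have second coordinates $x_0,-x_0,y_0,-y_0$, and as $x_0,y_0$ are odd and coprime to $3$, exactly one of $\{x_0,-x_0\}$ and one of $\{y_0,-y_0\}$ is $\equiv5\pmod6$, so each such orbit contributes two admissible solutions, while the unique $4$‑element orbit $\{(\pm s,\pm s)\}$ — which occurs precisely when $4m+1=s^2$ — contributes exactly one; hence $a(4m+1)$ is odd iff $4m+1$ is a perfect square, which is case (1). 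If $m\equiv1\pmod3$, then $N\equiv2\pmod3$, so exactly one of $x,y$ is divisible by $3$ in each representation; the same bookkeeping now gives exactly one admissible solution per $8$‑element orbit, and no $4$‑element orbit occurs ($N\equiv2\pmod3$ is a non‑square), so $a(4m+1)\equiv r_2(2N)/8\pmod2$, where $r_2(n)$ denotes the number of ordered representations of $n$ as a sum of two integer squares.

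Finally I would evaluate $r_2(2N)/8$ modulo $2$. Since $N$ is odd, the Jacobi two‑square formula gives $r_2(2N)=r_2(N)=4\bigl(d_1(N)-d_3(N)\bigr)$, where $d_i(N)$ counts the divisors of $N$ congruent to $i\pmod4$; by multiplicativity $d_1(N)-d_3(N)=\prod_{p_i\equiv1\,(4)}(\alpha_i+1)$ when every prime $p_i\equiv3\pmod4$ divides $N$ to an even power, and $d_1(N)-d_3(N)=0$ otherwise. Now $N=4m+1\equiv1\pmod4$ forces $\sum_{p_i\equiv3\,(4)}\alpha_i$ to be even, while $N\equiv2\pmod3$ makes $N$ a non‑square, so some $\alpha_i$ is odd; from this one checks that $\tfrac12\prod_{p_i\equiv1\,(4)}(\alpha_i+1)$ is an odd integer precisely when all $\alpha_i$ are even except a single exceptional one — which is then forced to be attached to a prime $\equiv1\pmod4$ — that is $\equiv1\pmod4$: this is exactly case (2). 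The step I expect to be the main obstacle is the orbit parity count in the third paragraph: one must get the three residue classes of $N$ modulo $3$ right, correctly determine which $8$‑element and which $4$‑element orbits actually contain a solution with $y\equiv5\pmod6$, and exclude degenerate solutions; the closing divisor‑sum computation is routine, although the use of $N\equiv1\pmod4$ to force the exceptional prime into the class $1\pmod4$ is easy to overlook.
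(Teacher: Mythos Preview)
Your proof is correct and follows essentially the same route as the paper: reduce to $[q^m](f_1^3f_3)$ via (\ref{351}) and (\ref{22}), interpret this as a count of representations $8m+2=x^2+y^2$ with the constraint coming from $y\equiv 5\pmod 6$, split according to $m\bmod 3$, and in the case $m\equiv 1\pmod 3$ finish with the Jacobi two-square formula. Your dihedral-orbit bookkeeping is a bit more explicit than the paper's symmetry argument, and you spell out the final divisor computation that the paper leaves to ``known facts about $r_2(n)$,'' but the structure and key ideas are the same.
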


\begin{proof}
We begin by noting that, by (\ref{351}),  $a(2m+1)$ is the same, modulo 2, as the coefficient of $q^m$ in the power series representation of $f_3^5/f_1^3$. Multiplying  across  (\ref{22}) by $f_3^2/f_1^6$ gives
\begin{equation}\label{73}
\frac{f_3^5}{f_1^3}\equiv f_1^6f_3^2+q\frac{f_3^{14}}{f_1^6}.
\end{equation}

Hence, by (\ref{73}), $a(4m+1)$ is congruent, modulo 2, to the coefficient of $q^m$ in the power series representation of $f_1^3f_3$. Therefore, from Euler's Pentagonal Number Theorem and Jacobi's formula (see the proof of Lemma \ref{2}), we obtain
$$f_1^3f_3\equiv  \sum_{a\ge 0} q^{a(a+1)/2} \cdot \sum_{b\in \mathbb Z} q^{3b(3b-1)/2}.$$
This means that, modulo 2, $a(4m+1)$ is the number of representations of $m$ of the form
$$
m = \frac{a(a+1)}{2} + \frac{3b(3b-1)}{2}
$$
with $a\geq 0$ and $b\in \mathbb Z.$  
By completing the square, this is equivalent to considering representations of $8m+2$ of the form 
$$
8m+2 = (4a^2+4a+1) + (36b^2 - 12b + 1) = (2a+1)^2 + (6b-1)^2,
$$
again with $a\geq 0$ and $b\in \mathbb Z.$  

Note that in representing $8m+2$ as
$$8m+2=c^2 + d^2$$
with $d$ not divisible by 3, we have $8m+2 \not\equiv 0\pmod{3}$, since $c^2 \equiv 0 \mbox{\ or\ } 1 \pmod{3}$  while $d^2$ can only be $1 \pmod{3}.$ Hence, $m\not\equiv 2\pmod{3}$.

Next, assume $8m+2 \equiv 2 \pmod{3}.$  Then, $m\equiv 0 \pmod{3}.$  This means that $c^2 \equiv d^2 \equiv 1 \pmod{3}$.  By symmetry of the representations of $8m+2$ (where $c$ and $d$ are interchangeable), it is clear that the desired number of ways to represent $8m+2$ as $c^2+d^2$, which we denote by $R_2(8m+2),$ is odd if and only if $c=d$. Hence
$$8m+2 = 2(4m+1)= 2c^2.$$
It follows that, for $m\equiv 0\pmod{3},$ $a(4m+1)$ is odd precisely when $4m+1$ is a square, as we wanted to show.

Finally, assume $8m+2 \equiv 1 \pmod{3},$ so that $m\equiv 1 \pmod{3}.$  Then $c \equiv 0 \pmod{3}$ since $d\not \equiv 0 \pmod{3}$.  Thus, since we no longer have symmetry in $c$ and $d$ in our representations, and $8m+2$ cannot be a square, it is easy to see that
$$
R_2(8m+2) = \frac{r_2(8m+2)}{8},
$$
where $r_2(8m+2)$ counts the number of arbitrary integer representations of
$$8m+2=2(4m+1)=2 \prod_{i\ge 1}p_i^{\alpha_i}$$
as a sum of two squares.

By known facts about $r_2(n)$ (see for instance \cite[Chapter 2]{Hi}) and some straightforward algebra, we conclude in this case that $a(4m+1)$ is odd if and only if all $\alpha_i$ are even except for precisely one of them, which is congruent to $1\pmod{4}$. This completes the proof.  
\end{proof} 

\begin{remark}\label{41r}
We now present an alternative proof to Theorem \ref{characterization41} for the case $m\equiv 0 \pmod{3}$. We want to show that $a(4m+1)$ is odd if and only if $4m+1$ is a square. We use the fact, shown at the beginning of the proof of the theorem, that $a(4m+1)$ coincides, modulo 2, with the coefficient of degree $m$ in the power series representation of $f_1^3f_3$.

Thus,  by (\ref{11}),
$$f_1^3 f_3 \equiv (f_3 + q f_9^3) f_3 \equiv f_3^2 + q f_3 f_9^3 \pmod{2}.$$

The contribution to the right side coming from the degrees $m\equiv 0 \pmod{3}$ is therefore given by the term
$$f_3^2 \equiv \sum_{n\in \mathbb Z} q^{3n(3n-1)} \pmod{2}.$$
Hence, when $m\equiv 0 \pmod{3}$, $a(4m+1)$ is odd if and only if
$$4m+1=4(3n(3n-1))+1=(6n-1)^2,$$
for $n\in \mathbb Z$. Since $4m+1$ is coprime to 6 for $m\equiv 0 \pmod{3}$, the last displayed formula means precisely that $4m+1$ is a square, as we wanted to show.
\\
\end{remark}

Theorem \ref{characterization41} can be used to easily prove infinitely many Ramanujan-like congruences modulo 2 satisfied by the function $a$ in arithmetic subprogressions of $4m+1.$  We outline some of these congruences here.  

\begin{corollary}
\label{example1}
For all $n\geq 0,$ $$a(12n+9) \equiv 0 \pmod{2}.$$
\end{corollary}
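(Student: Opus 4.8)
The plan is to recognize $12n+9$ as lying in the arithmetic progression $4m+1$ and then invoke Theorem \ref{characterization41} directly. Writing $12n+9 = 4(3n+2)+1$, we have $m = 3n+2$, so $m\equiv 2\pmod{3}$. Theorem \ref{characterization41} asserts that $a(4m+1)$ can be odd only in case 1), which requires $m\equiv 0\pmod{3}$, or in case 2), which requires $m\equiv 1\pmod{3}$. Since neither residue holds for $m = 3n+2$, we conclude that $a(12n+9)\equiv 0\pmod{2}$ for every $n\geq 0$.

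Alternatively, one can argue from inside the proof of Theorem \ref{characterization41} without appealing to its full statement. There it is shown that $a(4m+1)$ agrees modulo $2$ with the coefficient of $q^m$ in $f_1^3 f_3$, and by (\ref{11}) this is congruent to $f_3^2 + q f_3 f_9^3$. Every exponent occurring in $f_3^2 = \prod_{i\ge 1}(1-q^{3i})^2$ is divisible by $3$, while every exponent occurring in $q f_3 f_9^3$ is $\equiv 1\pmod{3}$; hence no monomial of degree $\equiv 2\pmod{3}$ survives, and in particular the coefficient of $q^{3n+2}$ vanishes modulo $2$.

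Since the statement is an immediate specialization of a theorem already proved, there is essentially no obstacle here: the only point to verify is the elementary congruence $3n+2\equiv 2\pmod{3}$, which simultaneously excludes both cases of Theorem \ref{characterization41}. The same bookkeeping also makes transparent that this is merely one instance of a whole family of such results — any arithmetic subprogression of $4m+1$ forcing $m$ into the residue class $2$ modulo $3$ produces a congruence of this type — which is how one generates the infinitely many Ramanujan-like congruences modulo $2$ referred to just before the corollary.
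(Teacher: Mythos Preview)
Your main argument is exactly the paper's proof: write $12n+9=4(3n+2)+1$, observe $m\equiv 2\pmod{3}$, and note that neither case of Theorem~\ref{characterization41} applies. Your alternative via $f_1^3 f_3\equiv f_3^2+q f_3 f_9^3$ is also correct and parallels the method of Remark~\ref{41r}.
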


\begin{proof}
Note that $12n+9 = 4(3n+2)+1$, so $m=3n+2 \equiv 2 \pmod{3}.$  Given Theorem \ref{characterization41}, we see that this result holds.  
\end{proof}

\begin{corollary}
\label{example2}
For all $n\geq 0,$ $$a(24n+13) \equiv 0 \pmod{2}.$$
\end{corollary}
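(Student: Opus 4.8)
The plan is to deduce this congruence directly from Theorem~\ref{characterization41}, in exactly the same spirit as the proof of Corollary~\ref{example1}. First I would put the progression into the form $4m+1$: since $24n+13 = 4(6n+3)+1$, the relevant parameter is $m = 6n+3$, which visibly satisfies $m \equiv 0 \pmod{3}$. Thus every term of the progression falls into case~1) of Theorem~\ref{characterization41}, which asserts that, for $m\equiv 0\pmod 3$, $a(4m+1)$ is odd if and only if $4m+1$ is a perfect square. So the problem reduces to showing that $24n+13$ is never a square.

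The second step is precisely this square obstruction, which follows from a residue computation: $24n+13 \equiv 5 \pmod{8}$, whereas every perfect square is congruent to $0$, $1$, or $4$ modulo $8$. Hence $24n+13$ cannot be a square for any $n$, and therefore $a(24n+13)$ is even for all $n\ge 0$, as claimed.

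I do not expect any real obstacle here; the only point that needs a moment's care is checking that the progression $24n+13$ lies entirely inside the branch $m\equiv 0\pmod 3$ of Theorem~\ref{characterization41}, rather than straddling the two cases, and the identity $m=6n+3$ settles that. As a cross-check, one could instead invoke Remark~\ref{41r}: in the case $m\equiv 0\pmod 3$, $a(4m+1)$ is odd exactly when $4m+1=(6k-1)^2$ for some $k\in\mathbb Z$; since $(6k-1)^2\equiv 1\pmod 8$ while $24n+13\equiv 5\pmod 8$, oddness is again impossible.
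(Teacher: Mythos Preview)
Your argument is correct and is essentially identical to the paper's own proof: both write $24n+13=4(6n+3)+1$ to land in the $m\equiv 0\pmod 3$ branch of Theorem~\ref{characterization41}, and then rule out $24n+13$ being a square via the residue $5\pmod 8$. The additional cross-check via Remark~\ref{41r} is a harmless extra and does not change the approach.
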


\begin{proof}
Note that $24n+13 = 4(6n+3)+1$, so $m=6n+3 \equiv 0 \pmod{3}.$  Thus, thanks to Theorem \ref{characterization41}, we must ask whether $24n+13$ can ever be a square.  However, $24n+13 \equiv 5 \pmod{8},$ while all squares must be congruent to 0, 1, or $4\pmod{8},$ and the result follows.  
\end{proof}

\begin{corollary}
\label{example3}
Let $p\geq 5$ be prime, and let $r,$ $1\leq r\leq p-1,$ be such that $12r+1$ is a quadratic nonresidue modulo $p.$  Then, for all $n\geq 0,$ $$a(12pn+12r+1) \equiv 0\pmod{2}.$$  
\end{corollary}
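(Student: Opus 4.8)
The plan is to reduce the statement to case 1) of Theorem \ref{characterization41} and then use the quadratic nonresidue hypothesis to rule out the relevant squares. First I would write $12pn+12r+1 = 4(3pn+3r)+1$, so that $m = 3pn+3r \equiv 0 \pmod{3}$. Thus Theorem \ref{characterization41} applies in its case 1), which says that for such $m$ the value $a(12pn+12r+1)$ is odd if and only if $12pn+12r+1$ is a perfect square. So it suffices to show this number is never a perfect square.

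Next I would argue by contradiction: suppose $12pn+12r+1 = t^2$ for some $t \in \Z$. Reducing modulo $p$ and using $p \geq 5$ (so that $p \nmid 12$), we get $t^2 \equiv 12r+1 \pmod{p}$. But $12r+1$ is a quadratic nonresidue modulo $p$, and in particular $12r+1 \not\equiv 0 \pmod{p}$, so this congruence has no solution in $t$, a contradiction. Hence $12pn+12r+1$ is not a square for any $n \geq 0$, and Theorem \ref{characterization41} gives $a(12pn+12r+1) \equiv 0 \pmod{2}$, as claimed.

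There is essentially no serious obstacle here; the only points needing a (trivial) verification are that the argument of $a$ genuinely lies in the subprogression $4m+1$ with $m \equiv 0 \pmod{3}$, and that a quadratic nonresidue is automatically nonzero modulo $p$, so that reducing a hypothetical square modulo $p$ yields a real contradiction rather than the degenerate case $t \equiv 0$. One might also remark, although it is not needed for the corollary itself, that such an $r$ always exists when $p \geq 5$: as $r$ runs over $1, \dots, p-1$, the quantity $12r+1$ runs over every residue class modulo $p$ except $1$ (which is a square), and hence attains all $(p-1)/2$ quadratic nonresidues.
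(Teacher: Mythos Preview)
Your proof is correct and follows exactly the same route as the paper: write $12pn+12r+1=4m+1$ with $m=3(pn+r)\equiv 0\pmod 3$, invoke case~1) of Theorem~\ref{characterization41}, and observe that $12pn+12r+1\equiv 12r+1\pmod p$ cannot be a square since $12r+1$ is a quadratic nonresidue. Your additional remarks (that $p\nmid 12$, that a nonresidue is nonzero, and that such an $r$ exists) are helpful clarifications but not part of the paper's argument.
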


\begin{proof}
Note that $m=3(pn+r),$ so $m\equiv 0\pmod{3}.$  We must check whether $12pn+12r+1 = 4(3(pn+r))+1$ is a square.  Notice that 
$$
12pn+12r+1 \equiv 12r+1 \pmod{p}
$$ 
and $12r+1$ is assumed to be a quadratic nonresidue modulo $p.$  Therefore, $12pn+12r+1$ cannot be a square.  By Theorem \ref{characterization41}, the result follows.  
\end{proof}
So, for example, the following Ramanujan-like congruences hold for all $n\geq 0$: 
\allowdisplaybreaks
{
\begin{eqnarray*}
a(60n+13) &\equiv & 0 \pmod{2} \ \ \ (p=5, r=1) \\
a(60n+37) &\equiv & 0 \pmod{2} \ \ \ (p=5, r=3) \\
a(84n+13) &\equiv & 0 \pmod{2} \ \ \ (p=7, r=1) \\
a(84n+61) &\equiv & 0 \pmod{2} \ \ \ (p=7, r=5) \\
a(84n+73) &\equiv & 0 \pmod{2} \ \ \ (p=7, r=6) \\
a(132n+13) &\equiv & 0 \pmod{2} \ \ \ (p=11, r=1) \\
a(132n+61) &\equiv & 0 \pmod{2} \ \ \ (p=11, r=5) \\
a(132n+73) &\equiv & 0 \pmod{2} \ \ \ (p=11, r=6) \\
a(132n+85) &\equiv & 0 \pmod{2} \ \ \ (p=11, r=7) \\
a(132n+109) &\equiv & 0 \pmod{2} \ \ \ (p=11, r=9) 
\end{eqnarray*}
{\ }\\
We now turn our attention to a characterization of the parity of $a(8m+3).$

\begin{theorem}
\label{characterization83}
Let the prime factorization of $8m+3$ be written as 
$$8m+3 =  \prod_{i\ge 1} p_i^{\beta_i}.$$
Then, for all $m\geq 0,$ 
$a(8m+3)$ is odd if and only if

1) $m\equiv 0\pmod{3},$ and $8m+3$ is 3 times a square; or

2) $m\equiv 1\pmod{3},$ and all $\beta_i$ are even except for exactly one of them, which is congruent to $1\pmod{4}$.

\end{theorem}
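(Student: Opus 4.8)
The plan is to follow the template of the proofs of Theorems~\ref{characterization2} and~\ref{characterization41}: reduce the parity of $a(8m+3)$ to a single power-series coefficient of a simple eta-quotient, convert that coefficient into a count of representations by a binary quadratic form, and then read off the answer from the arithmetic of that form. Iterating the bookkeeping from the proof of Theorem~\ref{characterization41} --- starting from the observation (made there) that $a(2k+1)$ is congruent mod $2$ to the coefficient of $q^k$ in $f_3^5/f_1^3$, using~(\ref{73}) together with the elementary congruence $f_r^2\equiv f_{2r}$ to split off even and odd parts (which yields, along the way, that $a(4k+3)$ is congruent mod $2$ to the coefficient of $q^k$ in $f_3^7/f_1^3$), and then multiplying~(\ref{22}) through by $f_3^4/f_1^6$ and splitting parts once more --- one finds that $a(8m+3)$ is congruent mod $2$ to the coefficient of $q^m$ in $f_1^3 f_6$ (pleasantly parallel to the identity for $a(4m+1)$ via $f_1^3 f_3$ in Theorem~\ref{characterization41}). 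Then Jacobi's identity and Euler's Pentagonal Number Theorem give $f_1^3 f_6\equiv\sum_{a\ge 0}\sum_{b\in\mathbb{Z}}q^{a(a+1)/2+3b(3b-1)}$, so, on completing the square, $a(8m+3)$ equals modulo $2$ the number of representations $8m+3=(2a+1)^2+2(6b-1)^2$ with $a\ge 0$ and $b\in\mathbb{Z}$; equivalently, the number of representations $8m+3=c^2+2d^2$ with $c=2a+1\ge 1$ odd and $d=6b-1\equiv -1\pmod 6$.

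The arithmetic analysis then splits according to $m$ modulo $3$. Since $8m+3\equiv 3\pmod 8$, any representation $c^2+2d^2=8m+3$ forces both $c$ and $d$ odd; reducing $c^2+2d^2\equiv c^2-d^2\pmod 3$ shows that for $m\equiv 2\pmod 3$ every representation requires $3\mid d$, contradicting $d\equiv -1\pmod 6$, so there are no admissible representations and $a(8m+3)$ is even (consistent with $m\equiv 2\pmod 3$ being absent from the statement). For $m\equiv 1\pmod 3$, by contrast, every representation has $3\mid c$ and $3\nmid d$, so the side conditions $c\ge 1$ and $d\equiv -1\pmod 6$ only cut the full number of representations down by a factor of $4$ --- via the fixed-point-free involutions $c\mapsto -c$ and $d\mapsto -d$, the admissible $d$ being exactly those with $d\equiv\pm 1\pmod 6$. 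Hence $a(8m+3)\equiv \frac14\,r(8m+3)\pmod 2$, where $r(n)$ denotes the total number of integer representations of $n$ by the form $x^2+2y^2$.

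For $m\equiv 1\pmod 3$ I would invoke the classical formula $r(n)=2\sum_{d\mid n}\bigl(\frac{-2}{d}\bigr)$ for the number of representations by $x^2+2y^2$ (valid since this form has class number one; here all divisors are odd), note by pairing $d\leftrightarrow n/d$ together with $\bigl(\frac{-2}{8m+3}\bigr)=1$ that this divisor sum is even, and expand it multiplicatively: writing $8m+3=\prod_i p_i^{\beta_i}$, the quantity $\frac14 r(8m+3)$ is odd precisely when every $p_i$ with $\bigl(\frac{-2}{p_i}\bigr)=-1$ has $\beta_i$ even and exactly one $\beta_i$ with $\bigl(\frac{-2}{p_i}\bigr)=1$ is $\equiv 1\pmod 4$, the rest being even. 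The last point is that if all $\beta_i$ are even except one, say $\beta_{i_0}\equiv 1\pmod 4$, then $8m+3=p_{i_0}^{\beta_{i_0}}\cdot(\text{odd square})\equiv 3\pmod 8$ forces $p_{i_0}\equiv p_{i_0}^{\beta_{i_0}}\equiv 3\pmod 8$, whence $\bigl(\frac{-2}{p_{i_0}}\bigr)=1$ holds automatically; so this condition is exactly case~2 of the statement.

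Finally, for $m\equiv 0\pmod 3$ the asymmetric form $c^2+2d^2$ is awkward, so I would instead argue as in Remark~\ref{41r}: by~(\ref{11}) and $f_6\equiv f_3^2$ one has $f_1^3 f_6\equiv f_3^3+q\,f_9^3 f_6$, whose part in degrees $\equiv 0\pmod 3$ is $f_3^3\equiv\sum_{n\ge 0}q^{3n(n+1)/2}$; thus $a(8m+3)$ is odd if and only if $8m+3=3(2n+1)^2$ for some $n\ge 0$, and since $8m+3\equiv 3\pmod 8$ forces any such square to be odd, this says precisely that $8m+3$ is $3$ times a square, i.e.\ case~1. The generating-function bookkeeping and the divisor-sum computation are routine; the genuinely delicate point --- and the reason the case $m\equiv 0\pmod 3$ needs this separate treatment rather than a direct divisor count --- is that, unlike the symmetric form $c^2+d^2$ of Theorem~\ref{characterization41}, here $c^2+2d^2$ has no $c\leftrightarrow d$ symmetry, and the constraint $3\nmid d$ ceases to be automatic in a representation of $8m+3$ once $9\mid 8m+3$ (for instance $27=5^2+2\cdot 1^2=3^2+2\cdot 3^2$), so a naive count by $r(8m+3)$ would overcount and would have to be corrected by $r\bigl((8m+3)/9\bigr)$.
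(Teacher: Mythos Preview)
Your proposal is correct and follows essentially the same route as the paper: both reduce $a(8m+3)\pmod 2$ to the coefficient of $q^m$ in $f_1^3 f_3^2$ (your $f_1^3 f_6$ is the same thing modulo 2), translate to counting representations $8m+3=c^2+2d^2$ with $3\nmid d$, and split by $m\bmod 3$.

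The only cosmetic differences are that for $m\equiv 1\pmod 3$ you package Dirichlet's formula via the character $\bigl(\tfrac{-2}{\,\cdot\,}\bigr)$ and a multiplicative expansion, whereas the paper writes it as $2(d_{1,8}+d_{3,8}-d_{5,8}-d_{7,8})$ and pairs divisors $\equiv 5$ with divisors $\equiv 7\pmod 8$ to reduce to $\sigma_0(8m+3)/2$; these are the same computation since $\bigl(\tfrac{-2}{d}\bigr)=+1$ for $d\equiv 1,3\pmod 8$ and $-1$ for $d\equiv 5,7\pmod 8$. For $m\equiv 0\pmod 3$ the paper's main proof does a case split on $m\bmod 9$ (handling the overcount by $r\bigl((8m+3)/9\bigr)$ exactly as you anticipate in your closing parenthetical), but then offers in Remark~\ref{ss} precisely the eta-identity shortcut $f_1^3 f_3^2\equiv f_3^3+q f_3^2 f_9^3$ that you adopt as your primary argument. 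So your treatment of that case is in fact the paper's preferred alternative.
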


\begin{proof}
Note that, by (\ref{73}), the parity of $a(4m+3)$ is the same as that of the coefficient of $q^m$ in the power series representation of $f_3^7/f_1^3$. Multiplying across  (\ref{22}) by $f_3^4/f_1^6$ yields
\begin{equation}\label{83}
\frac{f_3^7}{f_1^3}\equiv f_1^6f_3^4+q\frac{f_3^{16}}{f_1^6}.
\end{equation}

Therefore, $a(8m+3)$ is given, modulo 2, by the coefficient of $q^m$ in the power series representation of $f_1^3f_3^2$. Arguing similarly to the previous case, we obtain
$$f_1^3f_3^2\equiv  \sum_{a\ge 0} q^{a(a+1)/2}  \cdot \sum_{b\in \mathbb Z} q^{3b(3b-1)}.$$

By completing the square and using some simple algebra, our problem becomes equivalent to determining when the number of representations of $8m+3$ as
$$8m+3=c^2 + 2d^2$$
is odd, for $c$ and $d$ positive integers and $d\not\equiv 0\pmod{3}.$

First note that $m \not\equiv 2 \pmod{3}.$   Otherwise, $8m+3 \equiv 1 \pmod{3},$ but since squares can only be congruent to 0 or $1\pmod{3}$, this would force $d\equiv 0 \pmod{3}$ against our assumption.

Now assume that $m \equiv 1\pmod{3}.$   It follows that $8m+3 \equiv 2\pmod{3}$ and that $d^2$ is necessarily $1\pmod{3}$. Thus, in this case, our number of representations coincides with the number, say $S(8m+3)$, of arbitrary positive representations of $8m+3$ as $c^2 + 2d^2$.

Since $8m+3$ can neither be a square nor twice a square, we deduce that $S(8m+3)$ is exactly $1/4$ of the total number of integer representations of $8m+3$ as $c^2 + 2d^2$.
It is a classical fact going back to Dirichlet (see e.g. \cite{Hi1}) that the latter number is  
$$2 (d_{1,8}(8m+3) + d_{3,8}(8m+3) - d_{5,8}(8m+3) - d_{7,8}(8m+3)),$$
where $d_{i,8}(8m+3)$ counts the number of divisors of $8m+3$ that are congruent to $i \pmod{8}$.

Now note that the divisors of $8m+3$ congruent to 5 and those congruent to $7 \pmod{8}$ are in obvious bijection, since $5 \times 7 \equiv 3\pmod{8}.$
Therefore, 
\begin{align*}
S(8m+3) 
&=  2 (d_{1,8}(8m+3) + d_{3,8}(8m+3) - d_{5,8}(8m+3) - d_{7,8}(8m+3)) / 4 \\
&\equiv   2 (d_{1,8}(8m+3) + d_{3,8}(8m+3) + d_{5,8}(8m+3) + d_{7,8}(8m+3)) / 4 \pmod{2},
\end{align*}
where the latter formula is obviously half the number of all the positive divisors of $8m+3$. As usual, we denote this number by
$\sigma_0 (8m+3)$.

Recall that the prime factorization of $8m+3$ is 
$$8m+3 =  \prod_{i\ge 1} p_i^{\beta_i}.$$ 
It is a known fact that 
$$\sigma_0 (8m+3) = \prod_{i\ge 1} (\beta_i + 1).$$
Hence, we can easily see that
$$S(8m+3)\equiv \sigma_0 (8m+3) / 2 \equiv 1 \pmod{2}$$
if and only if all of the $\beta_i$ are even, except for precisely one which must be $1 \pmod{4}$. This concludes the proof when $m \equiv 1 \pmod{3}.$ 

Finally, let $m\equiv 0 \pmod{3}.$ Then $8m+3\equiv 0 \pmod{3}$ as well. Here we will provide an argument along the same lines as (though a little more complicated than) the previous case; we will sketch a different but shorter second proof in Remark \ref{ss}.
 
First note that, if $m$ is 0 or $6\pmod{9}$, $8m+3$ is divisible by 3 but not by 9, and therefore, in representing
$$8m+3=c^2+2d^2,$$
we necessarily have $d\not\equiv 0\pmod{3}.$  Otherwise, $c^2\equiv 0 \pmod{9}$, which implies $8m+3 \equiv 0 \pmod{9},$ a contradiction.

Hence, if $m$ is 0 or $6 \pmod{9}$, the desired representations of
$$8m+3=c^2+2d^2$$
coincide with the arbitrary positive representations of the same form. An argument entirely similar to the above now yields that the parity of such representations is the same as that of
$$\sigma_0(8m+3) / 2 =\left( \prod_{i\ge 1} (\beta_i +1) \right) / 2.$$
Notice in this case that $\beta_1 = 1$, since it is the exponent of 3 in the factorization of $8m+3$. 

Thus, when $m$ is 0 or $6 \pmod{9}$, we conclude that the desired number of representations is odd if and only if, for all $i\ge 2$, the $\beta_i$ are even. This is equivalent to saying that $8m+3$ is 3 times a square, which is the theorem.

The last case to consider is when $m\equiv 3\pmod{9}.$  Set $m=9w+3$, for some integer $w.$ We see that
$$8m+3 = 8(9w+3) + 3 = 9(8w+3).$$
Note that when $d\equiv 0 \pmod{3},$ $c\equiv 0\pmod{3}$ as well. Thus,
$$8m+3 = c^2 + 2d^2$$
becomes
$$8w+3 = c_1^2 + 2d_1^2,$$
where $c_1 = c/3$, and $d_1 = d/3.$

We deduce that the desired number of representations of $8m+3$ as $c^2 + 2d^2$, with $d$ not divisible by 3, coincides with the number of arbitrary positive representations of $8m+3$ minus the number of arbitrary positive representations of $8w+3$ of the same form. 

Now we notice that
$$8m+3 = 3^{\beta_1} \cdot \prod_{i\ge 2} p_i^{\beta_i},$$
where $\beta_1 \ge 2$ since $8m+3 \equiv 0 \pmod{9}.$ Hence
$$8w+3 = (8m+3) / 9 =  3^{\beta_1 - 2} \cdot \prod_{i\ge 2} p_i^{\beta_i}.$$

Arguing as above, it suffices to evaluate the parity of 
$( \sigma_0 (8m+3) - \sigma_0 (8w+3) ) /2.$
But the latter expression equals
$$( (\beta_1+1)  \cdot \prod_{i\ge 2} (\beta_i+1) -  (\beta_1 - 1)  \cdot \prod_{i\ge 2} (\beta_i+1) ) /2 =  \prod_{i\ge 2} (\beta_i+1) .$$

Thus, the desired number is odd if and only if the last product is odd, which happens precisely when all $\beta_i$ are even for $i\ge 2.$
Finally, since $8m+3$ cannot be a square, the exponent $\beta_1$ of 3 must be odd. Thus, $8m+3$ is 3 times a square, as we wanted to show.
\end{proof}

\begin{remark}\label{ss}
We now outline an alternative proof of Theorem \ref{characterization83} specifically for the case $m\equiv 0 \pmod{3}$, similarly to what we did in Remark \ref{41r} for the corresponding case of Theorem \ref{characterization41}. This proof is of a different nature but more straightforward than the one we included in the theorem, hence we believe also interesting.

We want to show that $a(8m+3)$ is odd if and only if $8m+3$ is 3 times a square. We employ a fact stated at the beginning of the  proof of Theorem \ref{characterization83}, namely that $a(8m+3)$ has the same parity of the coefficient of degree $m$ in the power series representation of $f_1^3 f_3^2$. Thus, arguing similarly to Remark \ref{41r}, note that, by (\ref{11}),
$$f_1^3 f_3^2 \equiv (f_3 + q f_9^3) f_3^2 \equiv f_3^3 + q f_3^2 f_9^3 \pmod{2}.$$

Hence, the contribution to the right side coming from the degrees $m\equiv 0 \pmod{3}$ is given by the term
$$f_3^3 \equiv \sum_{n\ge 0} q^{3n(n+1) / 2} \pmod{2}.$$
It follows that, when $m\equiv 0 \pmod{3}$, $a(8m+3)$ is odd if and only if
$$8m+3 = 8 (3n(n+1) / 2) +3 = 3 (2n+1)^2,$$
for $n\ge 0$. But this occurs precisely when $8m+3$ is 3 times a square, as desired.
\\
\end{remark}

We close this section by highlighting two corollaries which provide specific Ramanujan-like congruences for the function in question within the arithmetic progression $8m+3.$  

\begin{corollary}
For all $n\geq 0,$ $$a(24n+19)\equiv 0\pmod{2}.$$  
\end{corollary}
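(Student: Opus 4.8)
The statement to prove is the corollary asserting $a(24n+19)\equiv 0\pmod 2$ for all $n\geq 0$. The plan is to reduce it to Theorem~\ref{characterization83} by writing $24n+19$ in the form $8m+3$ and analyzing which case of that theorem applies. First I would observe that $24n+19 = 8(3n+2)+3$, so we are in the situation of Theorem~\ref{characterization83} with $m=3n+2$. Since $m=3n+2\equiv 2\pmod 3$, neither case (1) nor case (2) of the theorem can hold: case (1) requires $m\equiv 0\pmod 3$ and case (2) requires $m\equiv 1\pmod 3$. Hence $a(24n+19)$ is even.

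Alternatively — and this is the route I would actually write down, since it is cleaner and does not even require invoking the residue of $m$ — one can argue directly that $24n+19$ fails the arithmetic conditions of both cases. For case (1), $8m+3$ would have to be $3$ times a square; but $24n+19$ is not divisible by $3$ (indeed $24n+19\equiv 1\pmod 3$), so it cannot be $3$ times a square. For case (2), all but one of the prime exponents $\beta_i$ must be even with the exceptional one congruent to $1\pmod 4$, which in particular forces $m\equiv 1\pmod 3$ by the theorem's setup; since $m=3n+2\not\equiv 1\pmod 3$, this is impossible. Either way, $a(24n+19)$ is even, which is the claim.

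There is really no obstacle here: the whole content is in Theorem~\ref{characterization83}, and the corollary is a one-line deduction once $24n+19$ is recast as $8m+3$ and the residue class of $m$ modulo $3$ is computed. The only point requiring the slightest care is making sure that the progression $24n+19$ genuinely lands in the $8m+3$ (rather than $8m+7$) class modulo $8$ — indeed $24n+19\equiv 19\equiv 3\pmod 8$, so Theorem~\ref{characterization83} does apply — and that $m=3n+2$ exhausts exactly the residue $2$ modulo $3$, which is the single residue class excluded from both parts of the theorem.
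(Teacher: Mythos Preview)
Your argument is correct and matches the paper's proof exactly: write $24n+19=8(3n+2)+3$, observe $m=3n+2\equiv 2\pmod 3$, and conclude from Theorem~\ref{characterization83} that neither condition for $a(8m+3)$ to be odd can hold. The alternative direct check you offer is also fine but unnecessary, since the residue observation already suffices.
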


\begin{proof}
Note that $24n+19 = 8(3n+2)+3,$ so $m= 3n+2$ in this case.  Since $m\equiv 2\pmod{3},$ the result follows immediately from Theorem \ref{characterization83}.
\end{proof}

\begin{corollary}
\label{corollary24-3}
Let $p\geq 3$ be prime and let $r$ be an integer, $1\leq r\leq p-1,$ such that $8r+1$ is a quadratic nonresidue modulo $p.$  Then, for all $n\geq 0,$ $$a(24pn+24r+3) \equiv 0 \pmod{2}.$$  
\end{corollary}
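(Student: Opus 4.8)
The plan is to mimic the deduction of Corollary \ref{example3} from Theorem \ref{characterization41}. First I would write $24pn+24r+3 = 8m+3$ where $m = 3pn+3r = 3(pn+r)$, so that $m\equiv 0\pmod 3$ for every $n$. We are therefore in case (1) of Theorem \ref{characterization83}, which asserts that $a(8m+3)$ is odd if and only if $8m+3$ is three times a perfect square. Since
$$8m+3 = 24pn+24r+3 = 3(8pn+8r+1),$$
this happens precisely when $8pn+8r+1$ is a perfect square.

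The remaining step is to rule out that $8pn+8r+1$ is ever a perfect square, using the hypothesis on $r$. Reducing modulo $p$ gives $8pn+8r+1 \equiv 8r+1 \pmod p$, and $8r+1$ is assumed to be a quadratic nonresidue modulo $p$. In particular $8r+1\not\equiv 0\pmod p$, so $8pn+8r+1$ is congruent modulo $p$ to a quadratic nonresidue; since every perfect square reduces modulo $p$ to $0$ or to a quadratic residue, $8pn+8r+1$ cannot be a perfect square. Hence $8m+3$ is not three times a square, and Theorem \ref{characterization83} yields $a(24pn+24r+3)\equiv 0\pmod 2$, as claimed.

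There is essentially no obstacle here; the only points deserving a line of care are that an admissible residue $r$ exists when $p=3$ (for instance $r=2$, since $8\cdot 2+1=17\equiv 2\pmod 3$ is a nonresidue), and the equivalence ``$8m+3$ is three times a square $\iff$ $(8m+3)/3 = 8pn+8r+1$ is a perfect square,'' which is immediate because $8m+3 = 3k^2$ forces $k^2 = (8m+3)/3$ with no divisibility subtleties. As was done after Corollary \ref{example3}, one could round out the discussion by tabulating a few explicit congruences obtained from small primes $p$ and admissible values of $r$.
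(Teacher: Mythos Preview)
Your proposal is correct and follows essentially the same route as the paper: write $24pn+24r+3=8m+3$ with $m=3(pn+r)\equiv 0\pmod{3}$, invoke case (1) of Theorem \ref{characterization83} to reduce to showing $8pn+8r+1$ is not a square, and conclude by reducing modulo $p$ and using the quadratic-nonresidue hypothesis. The extra remarks you add (existence of an admissible $r$ when $p=3$, and the trivial equivalence with ``three times a square'') are fine but not strictly needed.
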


\begin{proof}
Note that $24pn+24r+3 = 8(3pn+3r)+3,$ so $m=3(pn+r)$ in this case.  Here, $m\equiv 0\pmod{3},$ so by Theorem \ref{characterization83}, we need to determine whether $24pn+24r+3$ is three times a square.  Equivalently, we need to know whether $8pn+8r+1$ is a square.  However, by our assumption, 
$$
8pn+8r+1 \equiv 8r+1 \pmod{p}
$$ 
cannot be a square because $8r+1$ is a quadratic nonresidue modulo $p.$  The result follows.  
\end{proof}

Thus, for example, thanks to Corollary \ref{corollary24-3}, the following Ramanujan-like congruences hold for all $n\geq 0$: 
\allowdisplaybreaks
{
\begin{eqnarray*}
a(72n+51) &\equiv & 0 \pmod{2} \ \ \ (p=3, r=2) \\
a(120n+51) &\equiv & 0 \pmod{2} \ \ \ (p=5, r=2) \\
a(120n+99) &\equiv & 0 \pmod{2} \ \ \ (p=5, r=4) \\
a(168n+51) &\equiv & 0 \pmod{2} \ \ \ (p=7, r=2) \\
a(168n+99) &\equiv & 0 \pmod{2} \ \ \ (p=7, r=4) \\
a(168n+123) &\equiv & 0 \pmod{2} \ \ \ (p=7, r=5) \\
a(264n+51) &\equiv & 0 \pmod{2} \ \ \ (p=11, r=2) \\
a(264n+123) &\equiv & 0 \pmod{2} \ \ \ (p=11, r=5) \\
a(264n+171) &\equiv & 0 \pmod{2} \ \ \ (p=11, r=7) \\
a(264n+195) &\equiv & 0 \pmod{2} \ \ \ (p=11, r=8) \\
a(264n+219) &\equiv & 0 \pmod{2} \ \ \ (p=11, r=9) 
\end{eqnarray*}

\section{A conjecture on the parity of $a(8m+7)$}

We wrap up our paper by presenting a conjecture on the parity of $a(n)$ within the arithmetic progression $n=8m+7$, which is the only case that was not considered in the characterizations of the previous section. 

First recall that the \emph{density} of the odd values of a sequence $s_n$ is defined by
$$\lim_{x \rightarrow \infty} \frac{\# \{n \leq x : s_n {\ }\text{is odd} \}}{x},$$
if such limit exists. The following is an important consequence of Theorems \ref{characterization2}, \ref{characterization41}, and \ref{characterization83}.

\begin{corollary}\label{cor}
Outside of the arithmetic progression $n\equiv 7\pmod{8}$, $a(n)$ is odd with density zero. In particular, the entire sequence $a(n)$ is odd with density at most $1/8$, if such density exists.
\end{corollary}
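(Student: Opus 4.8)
The plan is to deduce the density statement directly from the three characterization theorems, treating the residue classes modulo $8$ separately. First I would observe that every integer $n\ge 0$ falls into exactly one of the residue classes $0,2,4,6 \pmod 8$ (the even case), $1 \pmod 4$ with $n$ odd (which splits as $1,5 \pmod 8$), $3 \pmod 8$, or $7 \pmod 8$. The characterizations we have handle $n$ even (Theorem \ref{characterization2}), $n\equiv 1 \pmod 4$ (Theorem \ref{characterization41}, noting that $n\equiv 5 \pmod 8$ is covered since $4m+1$ ranges over all integers $\equiv 1 \pmod 4$, i.e. both $1$ and $5 \pmod 8$), and $n\equiv 3 \pmod 8$ (Theorem \ref{characterization83}). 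So outside $n\equiv 7 \pmod 8$, each case is pinned down.

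Next I would show that in each of these cases the set of $n\le x$ with $a(n)$ odd has cardinality $o(x)$. For the even case, Theorem \ref{characterization2} says $a(2m)$ is odd only when $m=0$ or $m=k^2$; the number of $2m\le x$ of this form is $O(\sqrt{x})=o(x)$. For $n\equiv 1 \pmod 4$: in subcase (1), $a(4m+1)$ odd forces $4m+1$ to be a perfect square, giving $O(\sqrt x)$ values; in subcase (2), $a(4m+1)$ odd forces $4m+1$ to be of the form (square)$\cdot p^{\alpha}$ with $\alpha\equiv 1 \pmod 4$ and the square part coprime to $p$, which in particular makes $4m+1$ a square times a prime — the count of such integers up to $x$ is $O(x/\log x)$ by standard estimates (or one can more crudely note these are squarefull-times-prime and invoke the fact that they have density zero). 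Either way it is $o(x)$. For $n\equiv 3 \pmod 8$: in subcase (1), $8m+3$ must be $3$ times a square, giving $O(\sqrt x)$ values; in subcase (2), $8m+3$ must again be a square times a single prime to an exponent $\equiv 1 \pmod 4$, hence $o(x)$ by the same reasoning. Summing these four $o(x)$ bounds gives that $\#\{n\le x : n\not\equiv 7 \pmod 8,\ a(n) \text{ odd}\} = o(x)$, which is the first assertion.

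For the second assertion, note that $\#\{n\le x : a(n) \text{ odd}\} \le \#\{n\le x : n\equiv 7 \pmod 8\} + \#\{n\le x : n\not\equiv 7 \pmod 8,\ a(n)\text{ odd}\} = \frac{x}{8} + o(x)$. Dividing by $x$ and letting $x\to\infty$, if the density of the odd values of $a(n)$ exists it is at most $1/8$. I would spell out the harmonization of the definition of density (the $\limsup$ of the ratio is $\le 1/8$ unconditionally, and this forces the limit, when it exists, to be $\le 1/8$).

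The only mild obstacle is making the density-zero claim for the prime-times-square integers fully rigorous: one must confirm that, in subcase (2) of both Theorems \ref{characterization41} and \ref{characterization83}, the integer in question is genuinely a perfect square times a single prime power, so that the set of such integers up to $x$ has size $O(\sqrt x \cdot \pi(x)) = o(x)$ — in fact $O(x^{1/2+\epsilon})$ suffices and is immediate, since for each of the $O(\sqrt x)$ choices of the square part $s$, the cofactor $n/s$ must be a prime power, and $\sum_{s\le \sqrt x}\pi(x/s^2) = o(x)$. This is entirely standard and I would present it in a sentence or two rather than belabor it.
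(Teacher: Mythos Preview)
Your overall strategy is correct and reaches the right conclusion, but it differs from the paper's argument. The paper does not invoke the final prime-factorization characterizations of Theorems~\ref{characterization41} and~\ref{characterization83} at all; instead it returns to the intermediate step of those proofs, namely that $a(4m+1)$ odd forces $8m+2=c^2+d^2$ and $a(8m+3)$ odd forces $8m+3=c^2+2d^2$, and then cites Landau's theorem that integers representable by a fixed binary quadratic form have order at most $x/\sqrt{\log x}$ up to $x$. Your direct count from the factorization conditions is more elementary (no Landau needed) and in fact yields the sharper bound $O(x/\log x)$; the paper's route is shorter because it avoids any explicit counting.

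Two of your stated bounds, however, are wrong and must be repaired. The assertion $O(\sqrt{x}\,\pi(x))=o(x)$ is false, since $\sqrt{x}\,\pi(x)\sim x^{3/2}/\log x$. The assertion that the count in subcase~(2) is $O(x^{1/2+\epsilon})$ is also false: taking the square part equal to $1$ and the exceptional exponent equal to $1$ makes $4m+1$ itself prime, and primes in the relevant residue class already contribute $\asymp x/\log x$ values of $n\le x$. What actually works is precisely the sum you write at the end, $\sum_{s\le \sqrt{x}}\pi(x/s^2)$; splitting this sum at, say, $s=x^{1/3}$ shows it is $O(x/\log x)=o(x)$. Drop the two erroneous intermediate bounds, keep that sum as your estimate (and note that prime powers with exponent $\ge 5$ contribute only $O(x^{5/6}\log x)$ extra), and the argument is complete.
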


\begin{proof}
When $n$ is even the result is clear by Theorem \ref{characterization2}, since squares have density zero in the integers.

As for $n\equiv 1 \pmod{4}$ or $n\equiv 3 \pmod{8}$, we have from the proofs of Theorems \ref{characterization41} and \ref{characterization83} that $a(n)$ is odd only if $n$ has suitable binary quadratic representations. But a classical theorem of Landau \cite{Land,SerLan} gives that only the order of $x/\sqrt{\log x}$ integers $n\le x$ can have such representations. Thus, the desired density-zero conclusion follows \emph{a fortiori} once we reduce modulo 2. This proves the corollary.
\end{proof}

Let us now turn to the arithmetic progression $n=8m+7$. Interestingly, here the behavior of our function modulo 2 appears to be extremely different. In fact, extensive computational data (for which we thank William Keith) leads us to conjecture the following.

\begin{conjecture}\label{conj}
The coefficients $a(8m+7)$ are odd with density $1/2$. Equivalently, $a(n)$ is odd with density $1/16$.
\end{conjecture}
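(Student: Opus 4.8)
The plan is to trace the generating-function reduction one step further than in the proofs of Theorems \ref{characterization2}, \ref{characterization41}, and \ref{characterization83}, and then confront the density question directly. From (\ref{83}), the parity of $a(4m+3)$ equals that of the coefficient of $q^m$ in $f_3^7/f_1^3$, and iterating the substitution via (\ref{22}) once more (multiplying across by $f_3^6/f_1^6$) would express $a(8m+7)$, modulo $2$, as the coefficient of $q^m$ in the power series for $f_1^3 f_3^3$. Using (\ref{11}) in the form $f_1^3 f_3^3 \equiv (f_3 + q f_9^3) f_3^3 \equiv f_3^4 + q f_3^3 f_9^3 \pmod 2$, one splits the target coefficient by the residue of $m$ modulo $3$: the degrees $m \equiv 0 \pmod 3$ are governed by $f_3^4 \equiv \sum_{n\in\Z} q^{3n(3n-1)\cdot 2}$ (a sum of two generalized pentagonal-type squares), which by a Landau-type argument is supported on a density-zero set, so the interesting contribution comes from the degrees $m \equiv 1, 2 \pmod 3$, governed by the coefficients of $f_3^3 f_9^3$, i.e. essentially $f_1^3 f_3^3$ evaluated along a dilated progression. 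The upshot is that the conjecture reduces to showing that the coefficients of $f_1^3 f_3^3 \pmod 2$ are odd with density $1/2$ along the appropriate congruence classes.

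The natural next move is to invoke (\ref{22}): $f_1^3 f_3^3 \equiv f_1^{12} + q f_3^{12} \pmod 2$, so modulo $2$ the coefficient of $q^n$ in $f_1^3 f_3^3$ is the coefficient of $q^n$ in $f_1^{12}$ when $n \equiv 0 \pmod 3$ (roughly), and the coefficient of $q^{(n-1)/3}$ in $f_4^{12}$-type series otherwise. Thus one is led to the parity of the coefficients of $f_1^{12} = \prod_{i\ge1}(1-q^i)^{12}$, equivalently of $\eta(q)^{12}$ up to a power of $q$. This is a weight-$6$ cusp form on a congruence subgroup, and the density of its odd coefficients is exactly the kind of question that — for $\eta$-powers of small weight — is known only in a handful of cases (weight $1/2$, i.e. $f_1$ itself, by Nicolas; a few others) and is open in general, with the expected answer $1/2$ matching the heuristic that the coefficients behave like a random element of $\mathbb{F}_2$. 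So the structure of the argument would be: (i) reduce $a(8m+7) \pmod 2$ to coefficients of $f_1^3 f_3^3$; (ii) reduce those to coefficients of $f_1^{12}$ via (\ref{22}); (iii) prove the density-$1/2$ statement for $f_1^{12} \pmod 2$.

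The main obstacle is step (iii), and it is a genuine wall, not a technical annoyance: proving that a fixed eta-quotient (here $\eta^{12}$, equivalently $\eta(q)^{24}/\eta(q)^{12}$ up to normalization) has odd coefficients with positive density — let alone density exactly $1/2$ — is beyond current technology for weights above $1/2$, precisely as the authors acknowledge in the introduction for $p(n)$ itself (whose parity problem is the $\eta^{-1}$ case and is famously open). The representation-theoretic and modular-forms machinery (Serre's results on lacunarity, Deligne–Serre, the theory of mod-$2$ Galois representations attached to cusp forms) can sometimes establish that the set of odd coefficients is \emph{not} lacunary, but pinning the density at $1/2$ would seem to require new input, perhaps an equidistribution statement for the relevant mod-$2$ Hecke action that is not presently available. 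Accordingly, I would present the reduction (i)–(ii) cleanly — it is short and follows the pattern already established in the paper — and then state honestly that (iii), and hence Conjecture \ref{conj}, remains open, flagging the forthcoming companion paper on eta-quotients as the place where the general framework for such density questions is developed. A realistic intermediate goal, worth attempting, is to at least prove that $a(8m+7)$ is odd infinitely often with a quantitative lower bound of the form $\gg x/\sqrt{\log x}$ or better, by exhibiting explicit odd coefficients coming from the $f_3^4$ term's complement, which would already separate the $8m+7$ progression qualitatively from the other three.
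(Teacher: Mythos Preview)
The statement is a \emph{conjecture}, and the paper does not prove it; the authors explicitly say that establishing it ``appears entirely out of reach today.'' You correctly recognize this, and your overall assessment---that the obstruction is a density-$1/2$ statement for the coefficients of a fixed eta-quotient, which is beyond current methods (just as for $p(n)$)---is sound and matches the paper's own discussion in the Remark following the conjecture.

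However, your specific reduction contains a genuine error. From (\ref{83}), the odd-degree part of $f_3^7/f_1^3$ is $q\,f_3^{16}/f_1^6 \equiv q\,f_6^8/f_2^3$, so $a(8m+7)\pmod 2$ is the coefficient of $q^m$ in $f_3^8/f_1^3$, exactly as the paper records in (\ref{3813}). It is \emph{not} the coefficient of $q^m$ in $f_1^3 f_3^3$: multiplying (\ref{22}) by $f_3^6/f_1^6$ produces an identity for $f_3^9/f_1^3$, which is the wrong series; and multiplying (\ref{22}) by $f_3^5/f_1^6$, which does give $f_3^8/f_1^3$ on the left, leaves $f_1^6 f_3^5 + q\,f_3^{17}/f_1^6$ on the right, and the stray odd power of $f_3$ prevents a clean even/odd dissection. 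So the chain of reductions that worked for $a(2m)$, $a(4m+1)$, $a(8m+3)$ genuinely terminates here, and your step (ii) reducing further to $f_1^{12}$ does not apply to $a(8m+7)$. The paper's reformulation stops at $f_3^8/f_1^3$ for precisely this reason.

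A smaller slip: in your mod-$3$ dissection $f_1^3 f_3^3 \equiv f_3^4 + q\,f_3^3 f_9^3$, the second term is supported only on degrees $\equiv 1 \pmod 3$ (both $f_3^3$ and $f_9^3$ live on multiples of $3$), so degrees $\equiv 2 \pmod 3$ receive zero contribution, not a contribution from $f_3^3 f_9^3$ as you wrote.
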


\begin{remark}
Using (\ref{83}), Conjecture \ref{conj} can be restated by saying that the coefficients in the power series representation of
\begin{equation}\label{3813}
\frac{f_3^8}{f_1^3}
\end{equation}
are odd with density $1/2$. Unfortunately, proving this fact still appears entirely out of reach today. Indeed -- in analogy with our current level of understanding of the coefficients in the power series representation of $1/f_1$, namely the ordinary partition function $p(n)$, for which the same odd density of $1/2$ was conjectured \cite{Calk,PaSh} -- we have not even been able to show that the odd density of $a(8m+7)$ must exist, or that it is positive.

We only remark here that a study of the parity of the coefficients of \emph{eta-quotients} (of which both $1/f_1$ and $f_3^8/f_1^3$ are interesting but specific instances), as well as some general conjectures on the behavior of those coefficients, will be the focus of an upcoming paper by the second author and W. Keith \cite{KZ}.\\
\end{remark}

\section*{Acknowledgements} The idea of this work originated during a visit of the first author to Michigan Tech in Spring 2020. The second author thanks his former PhD student Samuel Judge and William Keith for several discussions on the broader topic of this paper. The second author was partially supported by a Simons Foundation grant (\#630401).


\end{document}